	\theoremstyle{nonumberplain}
	\newtheorem{proof}{Proof}
\newtheorem{definition}{Definition}[section]
\newtheorem{theorem}{Theorem}[section]
\newtheorem{corollary}[theorem]{Corollary}
\newtheorem{lemma}[theorem]{Lemma}
\newtheorem{proposition}[theorem]{Proposition}
\title{A class of non-matchable distributive lattices%\thanks{This work was supported by NSFC (Grant No. 11761064).}
}
\author{Xu Wang, Xuxu Zhao and Haiyuan Yao\footnote{Corresponding author.}%
	\\ {\footnotesize College of Mathematics and Statistics, Northwest Normal University, Lanzhou, Gansu 730070, PR China}}
\date{}
\begin{document}

\maketitle

% E-JC papers must include an abstract. The abstract should consist of a
% succinct statement of background followed by a listing of the
% principal new results that are to be found in the paper. The abstract
% should be informative, clear, and as complete as possible. Phrases
% like "we investigate..." or "we study..." should be kept to a minimum
% in favor of "we prove that..."  or "we show that...".  Do not
% include equation numbers, unexpanded citations (such as "[23]"), or
% any other references to things in the paper that are not defined in
% the abstract. The abstract may be distributed without the rest of the
% paper so it must be entirely self-contained.  Try to include all words
% and phrases that someone might search for when looking for your paper.

\begin{abstract}
	The set of all perfect matchings of a plane (weakly) elementary bipartite graph equipped with a partial order is a poset, moreover the poset is a finite distributive lattice and its Hasse diagram is isomorphic to $Z$-transformation directed graph of the graph. A finite distributive lattice is matchable if its Hasse diagram is isomorphic to a $Z$-transformation directed graph of a plane weakly elementary bipartite graph, otherwise non-matchable. We introduce the meet-irreducible cell with respect to a perfect matching of a plane (weakly) elementary bipartite graph and give its equivalent characterizations. Using these, we extend a result on non-matchable distributive lattices, and obtain a class of new non-matchable distributive lattices.
	
	\textbf{Key words:} plane (weakly) elementary bigraph, $Z$-transformation digraph, meet-irreducible cell, non-matchable distributive lattice, planarity
	
	\textbf{2010 AMS Subj. Class.:} 05C70, 06D50 %%MSCclass
\end{abstract}

\section{Introduction}

Zhang et al.\ \cite{aZhangGC1988b} introduced a concept of $Z$-transformation graph (called by some authors resonance graph) on the set of perfect matchings (or 1-factors) of hexagonal system; in addition, Zhang and Zhang \cite{aZhangZ2000} extended the concept to a general plane bipartite graph with a perfect matching and obtained some results on a plane (weakly) elementary bipartite graph. Let $G$ be a graph with a perfect matching, denote by $\mathcal{M}(G)$ the set of all perfect matchings of $G$. The $Z$-transformation directed graph $\vec{Z}(G)$ is an orientation of $Z$-transformation graph by orientating all the edges \cite{aZhangZ1999}. Lam and Zhang \cite{aLamZ2003} proved that $\mathcal{M}(G)$ equipped with a partial order is a finite distributive lattice and its Hasse diagram is isomorphic to $\vec{Z}(G)$. There are some results on finite distributive lattices and $Z$-transformation directed graphs \cite{aZhangZY2004c,aZhang2006,aZhangOY2009}. Recently, Zhang et al.\ \cite{aZhangYY2014} introduced the concept of matchable distributive lattice and got some consequences on matchable distributive lattices, Yao and Zhang \cite{aYaoZ2015} obtained some results on
%\textsf{non-complete and non-planar}
non-matchable distributive lattices .

In the paper we first obtain Proposition~\ref{th:qkm} from the Proof of Lemma~3.7 in \cite{aZhangYY2007}. In a finite lattice, an element is \emph{meet-irreducible} if and only if it is covered by exactly one element. From a graphical point of view, if and only if there is exactly one arc (directed edge) to the vertex (element) in $\vec{Z}(G)$.
Consider the arc $f$ with its tail $M$, since $M$ and $f$ are perfect matching of $G$ and proper $M$-alternating cell, respectively, thus we call the cell meet-irreducible cell with respect to $M$. Furthermore, we have Theorem~\ref{th:mict} that is analogous to a lemma in \cite{aQiZ2010}. However, our method is completely different from their proof. Finally, by Theorem~\ref{th:mict}, we extend Theorem~4.8 in \cite{aYaoZ2015}, and obtain a class of non-matchable distributive lattices by Kuratowski's Theorem.

\section{Preliminaries}

A set $P$ equipped with a partial order relation $\le$ is said to be a \emph{partially ordered set} (poset for short). Given any poset $P$, the \emph{dual} $P^*$ of $P$ by defining $x\le y$ to hold in $P^*$ if and only if $y\le x$ holds in $P$. A poset $P$ is a \emph{chain} if any two elements of $P$ are comparable, and we write $\mathbf{n}$ to denote the chain obtained by giving $\{0,1,\dots,n-1\}$ the order in which $0<1<\cdots<n-1$. The set of all filters of a poset $P$ is denoted by $\mathcal{F}(P)$, and carries the usual anti-inclusion order; and the \emph{filter lattice} $\mathcal{F}(P)$ is a distributive lattice. A lattice is \emph{nontrivial} if it has at least two elements and a finite distributive lattice is \emph{irreducible} if it cannot be decomposed into a direct product of two nontrivial finite distributive lattices.

The symmetric difference of two finite sets $A$ and $B$ is defined as $A\oplus B := (A\cup B)\setminus(A\cap B)$. If $M$ is a perfect matching of a graph and $C$ is an $M$-alternating cycle of the graph, then the symmetric difference of $M$ and edge-set $E(C)$ is another perfect matching of the graph, which is simply denoted by $M\oplus C$. Let $G$ be a plane bipartite graph with a perfect matching, and the vertices of $G$ are colored properly black and white such that the two ends of every edge receive different colors.
An $M$-alternating cycle of $G$ is said to be \emph{proper}, if every edge of the cycle belonging to $M$ goes from white end-vertex to black end-vertex by the clockwise orientation of the cycle; otherwise \emph{improper} \cite{aZhangZ1997b}. An inner face of a graph is called a \emph{cell} if its boundary is a cycle, and we will say that the cycle is a cell too.

For some concepts and notations not explained in the paper, refer to \cite{bDaveyP2002,bStanl2011} for poset and lattice, \cite{bBondyM2008,bHarar1969} for graph theory.

%Let $G$ be a plane bipartite graph with a perfect matching $M$.
Obverse that the $M$-alternating cycle intersecting a improper $M$-alternating cycle must be proper, \textit{vice versa}. Obviously, we have the following result.

\begin{lemma}[\cite{aZhangZY2004c}]\label{th:ipdj}
	If $G$ be a plane bipartite graph with a matching $M$, then any two proper (resp.\ improper) $M$-alternating cells are disjoint.
\end{lemma}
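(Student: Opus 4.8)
The plan is to reduce the statement to a single local observation: \emph{any two $M$-alternating cells that intersect must have opposite types}, i.e.\ one is proper and the other improper. Granting this, the lemma follows at once, since two proper cells could then never intersect (they would be forced to have opposite types), and likewise for two improper cells; hence cells of the same type are pairwise disjoint. So the entire argument concentrates on comparing the types of two intersecting cells, exactly the observation recorded just before the statement.

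First I would show that two distinct $M$-alternating cells which intersect necessarily share an $M$-edge. Suppose cells $C_1$ and $C_2$ share a vertex $v$. Because each $C_i$ is an $M$-alternating cycle, exactly one of the two edges of $C_i$ incident with $v$ lies in $M$; and since $M$ is a matching, $v$ is incident with at most one edge of $M$, so this $M$-edge must be the \emph{same} for $C_1$ and $C_2$. Thus $C_1$ and $C_2$ share an $M$-edge $e=wb$, with $w$ white and $b$ black. In particular two intersecting cells always meet along such an edge, and can never meet at an isolated common vertex.

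The final step combines planarity with the bipartite structure. As $e$ lies on a cycle it is not a bridge, so it borders exactly two bounded faces, which here are $C_1$ and $C_2$; consequently, when the boundaries of $C_1$ and $C_2$ are both traversed clockwise, the edge $e$ is traversed in \emph{opposite} directions. Since $e\in M$ joins the white vertex $w$ to the black vertex $b$, one traversal runs from $w$ to $b$ and the other from $b$ to $w$. Finally, because $G$ is bipartite the colours alternate around each cell while the $M$-edges occur at every second edge, which forces all $M$-edges of a single cell to point the same way under its clockwise orientation; hence each $C_i$ is proper or improper precisely according to the direction in which it traverses $e$. As those directions are opposite, $C_1$ and $C_2$ receive opposite types, completing the reduction. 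I expect the main obstacle to be the careful justification of the planar orientation fact — that a shared boundary edge of two bounded faces is traversed oppositely in their respective clockwise boundary walks — together with the claim that an intersection forces a common $M$-edge; once these two facts are secured, the bipartite parity argument is routine.
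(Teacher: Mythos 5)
Your proposal is correct and follows essentially the same route the paper takes: the paper's entire justification is the remark preceding the lemma that an $M$-alternating cell intersecting an improper $M$-alternating cell must be proper (and \emph{vice versa}), and your argument is precisely a careful verification of that observation via the shared $M$-edge and the opposite clockwise traversals of the two bounded faces. No gaps; the reduction and both supporting facts are sound.
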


\begin{definition}[\cite{aZhangZ2000}]
	Let $G$ be a plane bipartite graph. The $Z$-transformation graph $Z(G)$ is defined on $\mathcal{M}(G)$: $M_1, M_2 \in \mathcal{M}(G)$ are joined by an edge if and only if $M_1\oplus M_2$ is a cell of $G$.
	And  $Z$-transformation digraph $\vec{Z}(G)$ is the orientation of $Z(G)$: an edge $M_1M_2$ of $Z(G)$ is oriented from $M_1$ to $M_2$ if $M_1\oplus M_2$ form a proper $M_1$-alternating (thus improper $M_2$-alternating) cell.
\end{definition}

An edge of graph $G$ is \emph{allowed} if it lies in a perfect matching of $G$. A graph $G$ is said to be \emph{elementary} if its allowed edges form a connected subgraph of $G$, then $G$ is connected and every edge of $G$ is allowed. A subgraph $H$ of $G$ is said to be \emph{nice} if $G - V(H)$ has a perfect matching \cite{bLovasP1986}. Let $G$ be a bipartite graph, from Theorem~4.1.1 in \cite{bLovasP1986}, we have that $G$ is elementary if and only if $G$ is connected and every edge of $G$ is allowed.

\begin{definition}[\cite{aZhangZ2000}]
	A bipartite graph $G$ is weakly elementary if the subgraph of $G$ consisting of $C$ together with its interior is elementary for every nice cycle $C$ of $G$.
\end{definition}

Let $G$ be a plane bipartite graph with a perfect matching, a binary relation $\le$ on $\mathcal{M}(G)$ is defined as: for $M_1,M_2\in\mathcal{M}(G)$, $M_1\le M_2$ if and only if $\vec{Z}(G)$ has a directed path from $M_2$ to $M_1$ \cite{aZhangZ2000}, thus $(\mathcal{M}(G);\le)$ is a poset \cite{aLamZ2003}. For convenient, we write $\mathcal{M}(G)$ for poset $(\mathcal{M}(G),\le)$.

\begin{theorem}[\cite{aLamZ2003}]\label{th:mfdl}
	If $G$ is a plane (weakly) elementary bipartite graph, then $\mathcal{M}(G)$ is a finite distributive lattice and its Hasse diagram is isomorphic to $\vec{Z}(G)$.
\end{theorem}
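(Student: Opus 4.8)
The plan is to linearize the problem by encoding each perfect matching as an integer height function on the faces, so that the partial order $\le$ becomes the pointwise order and $\mathcal{M}(G)$ is realized as a subposet of a product of chains. Fix the proper $2$-colouring of $G$ and let $F$ denote its set of faces. To each $M\in\mathcal{M}(G)$ I would attach a Thurston-type height function $h_M\colon F\to\mathbb{Z}$, normalized so that $h_M$ vanishes on the outer face and defined by the local rule that, for faces $f,f'$ sharing an edge $e$, the jump $h_M(f')-h_M(f)$ takes one of two prescribed values according to the colours of the ends of $e$ and whether $e\in M$; that $M$ is a perfect matching is exactly what makes this rule consistent around every vertex, so $M\mapsto h_M$ is a well-defined injection. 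I would then prove $M_1\le M_2\iff h_{M_1}\le h_{M_2}$ by checking it on a single arc and extending along directed paths: an arc $M_1\to M_2$ of $\vec{Z}(G)$ means $M_1\oplus M_2$ is a proper $M_1$-alternating cell $c$, and rotating $c$ lowers $h$ by one unit on the single face $c$ and leaves it unchanged elsewhere, so that moving downward in $\mathcal{M}(G)$ corresponds precisely to decreasing the height.

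For the lattice structure the key point is that the image $\{h_M:M\in\mathcal{M}(G)\}$ is closed under pointwise $\min$ and $\max$. Since being a height function is characterized by the local jump conditions across edges together with the boundary normalization, and since for Thurston heights of bipartite dimers these conditions are preserved when one takes the min or the max of two admissible functions, I would verify that $\min(h_{M_1},h_{M_2})$ and $\max(h_{M_1},h_{M_2})$ are again height functions, say $h_{M_1\wedge M_2}$ and $h_{M_1\vee M_2}$. Concretely these can be read off from $M_1\oplus M_2$: its components are cycles that are alternating with respect to both matchings and split into a proper and an improper family, disjoint by Lemma~\ref{th:ipdj}, and one takes $M_1\wedge M_2=M_1\oplus(\text{union of proper }M_1\text{-alternating cycles})$ and $M_1\vee M_2=M_1\oplus(\text{union of improper ones})$. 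Because $(\mathbb{Z}^F,\le)$ is distributive and $M\mapsto h_M$ identifies $\mathcal{M}(G)$ with a sublattice of it, distributivity of $\mathcal{M}(G)$ is immediate. The weakly elementary hypothesis enters here to guarantee that $Z(G)$ is connected, so that $\mathcal{M}(G)$ is a single finite lattice with least and greatest elements rather than a disjoint union.

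Finally, for the Hasse diagram I would show that $M_2$ is covered by $M_1$ exactly when $M_1\to M_2$ is an arc of $\vec{Z}(G)$. If $M_1\to M_2$ is an arc, the height difference $h_{M_1}-h_{M_2}$ is one unit on a single face, a minimal nonzero gap in $\mathbb{Z}^F$, so no matching can lie strictly between and the step is a cover; conversely, given $M_2<M_1$ one chooses an innermost proper $M_1$-alternating cell inside a component of $M_1\oplus M_2$ and rotates it, producing $M'$ with $M_2\le M'<M_1$, which forces every cover to be a single such cell. I expect the main obstacle to be the closure of the height-function image under $\min$ and $\max$ in the second paragraph --- equivalently, verifying that the cycle-decomposition matchings are genuinely the meet and join and not merely a lower and an upper bound. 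This is the step that truly uses planarity: the jump conditions are meaningful only because $G$ is embedded, and it is the disjointness of proper (resp.\ improper) $M$-alternating cells from Lemma~\ref{th:ipdj} that lets $M_1\oplus M_2$ split cleanly so the local argument can be run face by face.
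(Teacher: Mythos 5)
First, note that the paper does not prove this statement at all: Theorem~\ref{th:mfdl} is quoted from \cite{aLamZ2003} as a known result, so there is no internal proof to compare against. Your height-function strategy (encoding each perfect matching as an integer function on faces so that $\le$ becomes the pointwise order and meets/joins are pointwise $\min$/$\max$) is a genuinely different and in principle viable route from the combinatorial cycle-decomposition argument of \cite{aLamZ2003}; it is the standard Thurston-style approach in the dimer literature. However, as written it has two real gaps. The first is the definition of $h_M$ itself: if the jump across an edge $e$ is allowed to take only ``two prescribed values according to the colours of the ends of $e$ and whether $e\in M$,'' then consistency around a vertex $v$ of degree $d$ forces the relation $a+(d-1)b=0$ between the matched-edge increment $a$ and the unmatched-edge increment $b$, so a single pair $(a,b)$ works only when $G$ is regular. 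A general plane bipartite graph has vertices of varying degrees, so your local rule is inconsistent; one must either make the increments degree-dependent or (more cleanly) define $h_M$ relative to a fixed reference matching $M_0$ via the closed dual $1$-form $\mathbb{1}[e\in M]-\mathbb{1}[e\in M_0]$. This is fixable but not cosmetic, since everything downstream rests on $M\mapsto h_M$ being a well-defined order embedding.

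The second and more serious gap is that you never establish the existence of a proper $M$-alternating \emph{cell} inside (the interior of) a proper $M$-alternating \emph{cycle}. You invoke it twice implicitly: once when you claim $h_{M_1}\le h_{M_2}$ implies $M_1\le M_2$ (reachability in $\vec{Z}(G)$ requires decomposing a cycle rotation into a sequence of single-cell rotations), and once when you ``choose an innermost proper $M_1$-alternating cell inside a component of $M_1\oplus M_2$'' to characterize covers. This existence statement is precisely where the (weakly) elementary hypothesis does its work --- it is a nontrivial theorem of \cite{aZhangZ2000}, not a consequence of planarity alone, and it fails for general plane bipartite graphs (where $Z(G)$ can even be disconnected). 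Your remark that weak elementarity ``enters here to guarantee that $Z(G)$ is connected'' understates what is needed: connectivity of $Z(G)$ is itself a corollary of the cell-existence lemma, and without that lemma the pointwise order on heights and the reachability order need not coincide, so the identification of $\mathcal{M}(G)$ with a sublattice of $\mathbb{Z}^F$ --- and hence distributivity and the Hasse-diagram claim --- is not yet justified. (A minor further point: the components of $M_1\oplus M_2$ are pairwise disjoint simply because every vertex has degree at most $2$ in $M_1\cup M_2$; Lemma~\ref{th:ipdj} concerns alternating cells of a single matching and is not what gives this.)
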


\begin{definition}[\cite{aZhangYY2014}]
	A finite distributive lattice $L$ is matchable if there is a plane weakly elementary bipartite graph $G$ such that $L\cong\mathcal{M}(G)$; otherwise it is non-matchable.
\end{definition}

\section{Meet-irreducible cell}

The Proof of Lemma~3.7 in \cite{aZhangYY2007} implies the following proposition.
\begin{proposition}\label{th:qkm}
	If $G$ is a plane elementary bipartite graph with a perfect matching $M$, then there exists a hypercube in $\vec{Z}(G)$ generated by some pairwise disjoint $M$-alternating cells. In particular, $M$ is the top (resp.\ bottom) of the hypercube in $\mathcal{M}(G)$ if these $M$-alternating cells are proper (resp.\ improper).
\end{proposition}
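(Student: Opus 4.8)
The plan is to exhibit the hypercube explicitly as the family of perfect matchings obtained by independently flipping a suitable collection of pairwise disjoint $M$-alternating cells, and then to read off the orientation from the definition of $\vec{Z}(G)$. First I would fix a family $\{C_1,\dots,C_k\}$ of pairwise disjoint $M$-alternating cells and, for each subset $S\subseteq\{1,\dots,k\}$, set $M_S := M\oplus\bigoplus_{i\in S}C_i$. Because the cells are pairwise disjoint, each $M_S$ is again a perfect matching of $G$ (flipping disjoint $M$-alternating cycles one at a time keeps the result a perfect matching), and the edge sets $\bigoplus_{i\in S}C_i$ are distinct for distinct $S$, since a nonempty symmetric difference of pairwise disjoint cells is nonempty; hence the $2^k$ matchings $M_S$ are pairwise distinct.

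Next I would identify the induced subgraph of $Z(G)$ on $\{M_S : S\subseteq\{1,\dots,k\}\}$ with the $k$-cube. For two subsets $S,S'$ one has $M_S\oplus M_{S'}=\bigoplus_{i\in S\oplus S'}C_i$, which is a single cell of $G$ precisely when $|S\oplus S'|=1$; thus $M_S$ and $M_{S'}$ are adjacent in $Z(G)$ if and only if $S$ and $S'$ differ in exactly one coordinate, which is exactly the hypercube.

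For the existence and the orientation I would invoke Lemma~\ref{th:ipdj}: all proper $M$-alternating cells of $G$ are already pairwise disjoint, so I may simply take $\{C_1,\dots,C_k\}$ to be the set of all proper $M$-alternating cells (the set of all improper ones works symmetrically). The crucial point is that a proper $M$-alternating cell $C_i$ with $i\notin S$ remains a proper $M_S$-alternating cell: disjointness guarantees that the edges of $C_i$ lying in $M$ coincide with those lying in $M_S$, while properness depends only on the fixed planar embedding and the fixed black/white colouring, neither of which is affected by flipping cells disjoint from $C_i$. Consequently every cube edge from $M_S$ to $M_{S\cup\{i\}}$ is oriented from $M_S$ to $M_{S\cup\{i\}}$ in $\vec{Z}(G)$, so by the definition of the order on $\mathcal{M}(G)$ we get $M_{S\cup\{i\}}\le M_S$; chaining these relations shows $M_S\le M=M_\emptyset$ for all $S$, i.e.\ $M$ is the top of the cube. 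The improper case is identical with the orientation reversed, placing $M$ at the bottom.

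The main obstacle I anticipate is precisely the invariance of properness under flipping disjoint cells, since this is what makes the orientation of the whole cube uniform, rather than only of the edges incident to $M$. This is the step that genuinely uses planarity through Lemma~\ref{th:ipdj} and the orientation convention of $\vec{Z}(G)$; once it is in place, distinctness, the cube adjacency, and the top/bottom conclusion follow by direct symmetric-difference bookkeeping, with Theorem~\ref{th:mfdl} ensuring that this directed cube is indeed a substructure of the Hasse diagram of the lattice $\mathcal{M}(G)$.
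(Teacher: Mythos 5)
Your argument is correct: the subset-flipping construction $M_S=M\oplus\bigoplus_{i\in S}C_i$, the distinctness and cube-adjacency bookkeeping, the observation that disjointness preserves both the $M$-alternation and the properness of each unflipped cell (so the whole cube is oriented coherently toward/away from $M$), and the appeal to Lemma~\ref{th:ipdj} to take all proper (or all improper) $M$-alternating cells as the generating family are exactly what is needed. The paper supplies no proof of Proposition~\ref{th:qkm} --- it only notes that the statement is implicit in the proof of Lemma~3.7 of \cite{aZhangYY2007} --- and your explicit construction is essentially that same standard argument, so the approach matches and nothing is missing.
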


It is obvious that the dimension of the hypercube is equal to the number of these pairwise disjoint $M$-alternating cells. In particular, the hypercube is a quadrilateral if and only if it is generated by exactly two disjoint $M$-alternating cells in $G$ \cite{aZhangOY2009,aZhangZY2004c,aZhangYY2007}.

\begin{definition}
	Let $G$ be a plane (weakly) elementary bipartite graph with a perfect matching $M$. A meet-irreducible cell $f$ with respect to $M$ is a proper $M$-alternating cell if and only if $M\oplus f$ is meet-irreducible in $\mathcal{M}(G)$.
\end{definition}

\begin{theorem}\label{th:mict}
	Let $G$ be a plane (weakly) elementary bipartite graph $G$ with perfect matching $M$ and let $f$ be a proper $M$-alternating cell.
	\begin{enumerate}
		\itemsep=0em \parskip=0em   %%ÁÐ±íÀï¼ä¾àºÍÐÐÎÄÒ»ÖÂ
		\item If $G$ has no improper $M$-alternating cell (namely, $M$ is the top of $\mathcal{M}(G)$), then every (proper) $M$-alternating cell is a meet-irreducible cell with respect to $M$;
		\item If $G$ has some improper $M$-alternating cells, then the following are equivalent:
		\begin{enumerate}
			\item the cell $f$ is a meet-irreducible cell with respect to $M$;
			\item the cell $f$ intersects every improper $M$-alternating cell;
			\item there is no perfect matching $M'$ in $V(\mathcal{Q})\setminus\{M\}$ such that $f$ is a proper $M'$-alternating cell, where $\mathcal{Q}$ is a hypercube generated by all improper $M$-alternating cells.
		\end{enumerate}
	\end{enumerate}
\end{theorem}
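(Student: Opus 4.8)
The plan is to translate meet-irreducibility into a counting statement about alternating cells and then route everything through Lemma~\ref{th:ipdj}. Recall that an element is meet-irreducible in $\mathcal{M}(G)$ exactly when exactly one arc of $\vec{Z}(G)$ enters it, i.e.\ when it has a unique upper cover. An arc enters $M\oplus f$ from a matching $N$ precisely when $N\oplus(M\oplus f)$ is a proper $N$-alternating (hence improper $(M\oplus f)$-alternating) cell, so the upper covers of $M\oplus f$ correspond bijectively to the improper $(M\oplus f)$-alternating cells. Since $f$ is proper $M$-alternating, passing to $M\oplus f$ makes $f$ an improper $(M\oplus f)$-alternating cell; thus $f$ is meet-irreducible with respect to $M$ if and only if $f$ is the \emph{only} improper $(M\oplus f)$-alternating cell. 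Throughout let $\mathcal{I}$ be the set of improper $M$-alternating cells; by Lemma~\ref{th:ipdj} its members are pairwise disjoint, and by Proposition~\ref{th:qkm} they generate the hypercube $\mathcal{Q}$ with bottom $M$ and $V(\mathcal{Q})=\{\,M\oplus\bigoplus_{c\in S}c : S\subseteq\mathcal{I}\,\}$.

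For $(a)\Leftrightarrow(b)$ I would first pin down the improper $(M\oplus f)$-alternating cells. Applying Lemma~\ref{th:ipdj} to the matching $M\oplus f$, any improper $(M\oplus f)$-alternating cell other than $f$ is disjoint from $f$; and a cell disjoint from $f$ shares no edge with $f$, so it sees identical matching edges under $M$ and under $M\oplus f$ and keeps both its alternating status and its orientation. Hence the improper $(M\oplus f)$-alternating cells are exactly $f$ together with the members of $\mathcal{I}$ disjoint from $f$. Therefore $f$ is the unique such cell if and only if no member of $\mathcal{I}$ is disjoint from $f$, i.e.\ if and only if $f$ meets every improper $M$-alternating cell, which is $(a)\Leftrightarrow(b)$. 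Statement~1 is then the degenerate case $\mathcal{I}=\varnothing$: condition $(b)$ holds vacuously, so every proper $M$-alternating cell is meet-irreducible.

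For $(b)\Leftrightarrow(c)$ the direction $\lnot(b)\Rightarrow\lnot(c)$ is immediate: if some $c\in\mathcal{I}$ is disjoint from $f$, then $N:=M\oplus c\in V(\mathcal{Q})\setminus\{M\}$ (the $M'$ of $(c)$), and since flipping $c$ leaves $E(f)$ untouched, $f$ stays a proper $N$-alternating cell. The substantive direction is $(b)\Rightarrow(c)$, and this is the crux. Suppose $N=M\oplus\bigoplus_{c\in S}c\in V(\mathcal{Q})$ has $f$ as a proper $N$-alternating cell. Because the cells of $\mathcal{I}$ are pairwise disjoint, the flips are independent on each boundary: for every $c\in S$ the matchings $N$ and $M\oplus c$ agree on $E(c)$, so each such $c$ is itself a \emph{proper} $N$-alternating cell. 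Now $f$ and each $c\in S$ are two proper $N$-alternating cells, so Lemma~\ref{th:ipdj} forces $f$ to be disjoint from every $c\in S$. Under hypothesis $(b)$, $f$ meets every member of $\mathcal{I}\supseteq S$, so $S=\varnothing$ and $N=M$, which is exactly $(c)$.

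The step I expect to carry the weight is the passage from ``$f$ is proper $N$-alternating'' to ``$f$ is disjoint from the flipped cells.'' The naive route would track how toggling the edges of an intersecting cell $c_0$ destroys the alternation along the boundary of $f$, a fiddly local case analysis over shared vertices and edges. The plan deliberately sidesteps this: once one notices that every flipped cell is itself proper under $N$, the disjointness falls out of Lemma~\ref{th:ipdj} with no boundary bookkeeping. The only points requiring care are that flips of pairwise-disjoint cells are genuinely independent (so each $c\in S$ is proper under $N$, and each cell disjoint from $f$ retains its $M$-status under $M\oplus f$), and that ``disjoint'' in Lemma~\ref{th:ipdj} is precisely the negation of ``intersects'' used in $(b)$.
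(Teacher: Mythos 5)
Your proposal is correct and follows essentially the same route as the paper's proof: both arguments hinge on identifying the upper covers of $M\oplus f$ with the improper $(M\oplus f)$-alternating cells, on Lemma~\ref{th:ipdj} to force disjointness of same-parity alternating cells, and on Proposition~\ref{th:qkm} to read off that every flipped cell in $\mathcal{Q}$ is proper under $M'$. The only difference is organizational (you prove $(a)\Leftrightarrow(b)$ and $(b)\Leftrightarrow(c)$ directly, while the paper runs a cycle of three contradiction arguments), which does not change the substance.
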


\begin{proof}
	1.~It is trivial by the definition of $Z$-transformation directed graph.
	
	2.~Firstly suppose that the cell $f$ is a meet-irreducible cell with respect to $M$, but there is at least one improper $M$-alternating cell $f'$ such that $f$ and $f'$ are disjoint. Thus $M\oplus f = ((M\oplus f')\oplus f)\oplus f'$, i.e.\ $G$ has two improper $M\oplus f$-alternating cells, hence $M\oplus f$ is not meet-irreducible, contradicting the supposition that $f$ is a meet-irreducible cell with respect to $M$.
	
	Next suppose that the cell $f$ intersects every improper $M$-alternating cell, but there is a perfect matching $M'$ in $V(\mathcal{Q})\setminus\{M\}$ such that $f$ is a proper $M'$-alternating cell. In fact, by Proposition~\ref{th:qkm}, there is at least one improper $M$-alternating cell $f'$ is a proper $M'$-alternating cell. 
	Hence $f$ and $f'$ are disjoint by Lemma~\ref{th:ipdj},
	a contradiction.
	
	Finally, suppose that there is no perfect matching $M'$ in $V(\mathcal{Q})\setminus\{M\}$ such that $f$ is a proper $M'$-alternating cell, but $f$ is not a meet-irreducible cell with respect to $M$. Thus $G$ has at least one improper $M\oplus f$-alternating cell $f'$ except $f$, by Lemma~\ref{th:ipdj}, hence $f$ and $f'$ are disjoint. Therefore $f'$ is an improper $M$-alternating cell, this means that $f$ is a proper $M\oplus f'$-alternating cell, i.e.\ there is a perfect matching $M'=M\oplus f'$ in $V(\mathcal{Q})\setminus\{M\}$ such that $f$ is a proper $M'$-alternating cell, a contradiction.
\end{proof}

If every proper $M$-alternating cell is a meet-irreducible cell with respect to $M$, then $M$ is a top of $\mathcal{M}(G)$ if $G$ has no improper $M$-alternating cell, otherwise cut vertex in $Z(G)$. Moreover we obtain the following corollary as a consequence of Theorem~\ref{th:mict}.

\begin{corollary}[\cite{aZhangZ1999,aZhangZY2004c}]\label{th:cutmic}
	If $G$ is a plane elementary bipartite graph with a perfect matching $M$, then $M$ is a cut vertex of $Z(G)$ if and only if $G$ has both proper and improper $M$-alternating cells and every proper $M$-alternating cell is a meet-irreducible cell with respect to $M$; i.e.\ every proper $M$-alternating cell intersects every improper $M$-alternating cell.
\end{corollary}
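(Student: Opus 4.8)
The plan is to translate ``$M$ is a cut vertex of $Z(G)$'' into a statement about the components of $Z(G)-M$ together with the neighbours of $M$. Since $G$ is elementary, $Z(G)$ is connected (its underlying graph is the cover graph of the finite distributive lattice of Theorem~\ref{th:mfdl}), so $M$ fails to be a cut vertex exactly when all neighbours of $M$ lie in one component of $Z(G)-M$; those neighbours are precisely the matchings $M\oplus f$ for $f$ a proper (``down'') or improper (``up'') $M$-alternating cell. First I would record two structural facts coming from Lemma~\ref{th:ipdj} and the quadrilaterals of Proposition~\ref{th:qkm}: since any two proper cells $f_1,f_2$ are disjoint, the matchings $M\oplus f_1$ and $M\oplus f_2$ are joined by the path $M\oplus f_1,\ M\oplus f_1\oplus f_2,\ M\oplus f_2$ avoiding $M$, so all down-neighbours lie in a single component of $Z(G)-M$; dually, all up-neighbours lie in a single component.

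For the forward implication, suppose $M$ is a cut vertex. If $G$ had no improper (resp.\ no proper) cell, then $M$ would be the top (resp.\ bottom) of $\mathcal{M}(G)$; but the top and bottom of a finite distributive lattice are never cut vertices of its Hasse diagram, since every element lies under some coatom and any two coatoms are joined below $\hat 1$ through their meet, so $L\setminus\{\hat 1\}$ is connected. Hence both cell types occur. If moreover some proper cell $f$ and improper cell $f'$ were disjoint, the quadrilateral $M,\ M\oplus f,\ M\oplus f\oplus f',\ M\oplus f'$ would connect a down-neighbour to an up-neighbour off $M$, merging the two components above into one; then all neighbours of $M$ would lie in one component, contradicting that $M$ is a cut vertex. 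Therefore every proper cell meets every improper cell, which by the equivalence (a)$\Leftrightarrow$(b) of Theorem~\ref{th:mict} is the same as saying every proper cell is meet-irreducible with respect to $M$.

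For the reverse implication I would pass to Birkhoff's representation: by Theorem~\ref{th:mfdl} the lattice $\mathcal{M}(G)$ is finite distributive, hence isomorphic to the lattice $\mathcal{O}(P)$ of order ideals of its poset $P$ of join-irreducibles, ordered by inclusion, with $M$ corresponding to an ideal $I_M$. Under this identification the down-neighbours of $M$ are the ideals $I_M\setminus\{p\}$ with $p$ maximal in $I_M$, the up-neighbours are the ideals $I_M\cup\{q\}$ with $q$ minimal in $P\setminus I_M$, and a direct check shows that ``every down-neighbour is meet-irreducible'' translates exactly into $p<q$ for all such $p$ and $q$. The key lemma is then: if $I\subsetneq I_M$ and $I\lessdot I'=I\cup\{r\}$, then $r\in I_M$. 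Indeed, were $r\notin I_M$, minimality of $r$ in $P\setminus I$ together with $p<q\le r$ for every maximal $p$ of $I_M$ would force every such $p$ into $I$, whence $I\supseteq I_M$, a contradiction. Thus every edge of $Z(G)$ leaving $\{N\in\mathcal{M}(G):N<M\}$ ends at $M$; as this set contains a down-neighbour and its complement contains an up-neighbour, $Z(G)-M$ is disconnected and $M$ is a cut vertex.

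The main obstacle is the reverse implication: one must rule out a walk that leaves the principal ideal below $M$, passes through matchings incomparable to $M$, and re-enters the filter above $M$ without ever meeting $M$. The order-ideal reformulation is precisely what tames this ``winding'' phenomenon, reducing it to the impossibility of inserting an element of $P\setminus I_M$ before all maximal elements of $I_M$ are present. The remaining steps---verifying the translation of meet-irreducibility into $p<q$, and that neither top nor bottom is a cut vertex---are routine, and the final ``i.e.'' clause is exactly the equivalence (a)$\Leftrightarrow$(b) of Theorem~\ref{th:mict}.
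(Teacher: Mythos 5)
Your proof is correct, but note that the paper itself gives essentially no argument here: the corollary is attributed to the cited references and presented as an immediate consequence of Theorem~\ref{th:mict}, with only the one-sentence remark preceding it as justification. Your forward direction is the route the paper clearly intends: exclude the top/bottom cases because deleting $\hat 1$ (or $\hat 0$) from the cover graph of a finite distributive lattice leaves it connected, use Lemma~\ref{th:ipdj} to place all down-neighbours (and all up-neighbours) of $M$ in a single component of $Z(G)-M$, and observe that a disjoint proper/improper pair would close a quadrilateral merging the two; the equivalence (a)$\Leftrightarrow$(b) of Theorem~\ref{th:mict}(2) then supplies the ``i.e.'' clause. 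Where you genuinely diverge is the converse, which is the only part with real content and which the paper leaves implicit. You pass to Birkhoff's representation $\mathcal{M}(G)\cong\mathcal{O}(P)$, translate ``every lower cover of $M$ is meet-irreducible'' into ``$p<q$ for every maximal $p$ of $I_M$ and every minimal $q$ of $P\setminus I_M$,'' and prove the key lemma that no ideal properly contained in $I_M$ is covered by an ideal containing a point outside $I_M$; I checked both the translation and the lemma and they are right, and they correctly kill the one genuine danger, a path in $Z(G)-M$ from below $M$ to above $M$ through matchings incomparable with $M$. This order-theoretic route buys something the graph-theoretic one does not: your converse is a statement about arbitrary finite distributive lattices (planarity and cells are never used there), whereas the sources argue directly with alternating cycles. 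The price is that the proof leaves the matching-theoretic picture entirely, but as a verification of the corollary it is complete.
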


Note that duality of lattice, meet-irreducible cell, Theorem~\ref{th:mict} and Corollary~\ref{th:cutmic} could be treated in dual.

\section{Non-matchable distributive lattice}
Subdivide an edge $e$ is to delete $e$, add a new vertex $v$, and join $v$ to the ends of $e$.
Any graph derived from a graph $G$ by a sequence of edge subdivisions is called a subdivision of $G$.

\begin{theorem}[Kuratowski's Theorem]
	A graph is planar if and only if it contains no subdivision of either $K_5$ or $K_{3,3}$.
\end{theorem}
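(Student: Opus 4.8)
The plan is to prove the two implications separately, with the forward (necessity) direction being routine and the converse (sufficiency) being the substantial part. For necessity I would first establish that $K_5$ and $K_{3,3}$ are themselves non-planar by means of Euler's formula $|V|-|E|+|F|=2$ for a connected plane graph. In $K_5$ one has $|V|=5$ and $|E|=10$, so $|F|=7$; since every face is bounded by at least three edges and each edge borders at most two faces, $3|F|\le 2|E|$, giving $21\le 20$, a contradiction. For $K_{3,3}$ one has $|V|=6$, $|E|=9$, hence $|F|=5$; as the graph is triangle-free every face is bounded by at least four edges, so $4|F|\le 2|E|$, giving $20\le 18$, again impossible. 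Then, observing that planarity is inherited by subgraphs and is invariant under subdivision of edges (subdividing an edge does not change the topological type of an embedding), any graph containing a subdivision of $K_5$ or $K_{3,3}$ must be non-planar. This proves that planarity forces the absence of such subdivisions.

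For the converse I would argue by contradiction using an edge-minimal counterexample: let $G$ be a non-planar graph containing no subdivision of $K_5$ or $K_{3,3}$, chosen with $|E(G)|$ as small as possible. The first reduction is to show that $G$ must be $3$-connected. If $G$ had a cutvertex or a $2$-element separator, I would split $G$ along this small separator into strictly smaller pieces; each piece inherits the property of containing no such subdivision and hence, by minimality, is planar, and planar embeddings of the pieces can be glued together along the separator to embed $G$, contradicting its non-planarity. Hence $G$ is $3$-connected.

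The core of the argument is the $3$-connected case, and this is where I expect the main obstacle to lie. Here I would invoke the edge-contraction lemma for $3$-connected graphs: every $3$-connected graph on at least five vertices has an edge $e=xy$ whose contraction $G/e$ is again $3$-connected. Since $G/e$ has fewer edges and still contains no Kuratowski subdivision, minimality gives a plane embedding of $G/e$; moreover, by the essential uniqueness of embeddings of $3$-connected plane graphs, its faces are bounded by cycles. Writing $v$ for the vertex obtained by contracting $e$, I would then attempt to split $v$ back into $x$ and $y$, redistributing the neighbours of $v$ (according to their adjacency to $x$ or to $y$ in $G$) into two contiguous arcs around the cyclic ordering at $v$, so as to recover a plane embedding of $G$.

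The delicate step is the case analysis showing that whenever this redistribution is \emph{obstructed} — that is, whenever the $x$-neighbours and the $y$-neighbours unavoidably interleave around the relevant facial cycle — the obstruction itself yields an explicit Kuratowski subdivision inside $G$: a subdivision of $K_{3,3}$ when three neighbours alternate, or of $K_5$ when the two neighbourhoods overlap in the remaining prescribed pattern. Producing these subdivisions cleanly from the interleaving configuration, and checking that the enumerated configurations are exhaustive, is the technically heaviest part of the proof. In every case the resulting subdivision contradicts the choice of $G$, so no counterexample can exist, which completes the argument.
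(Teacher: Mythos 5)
The paper does not prove this statement at all: Kuratowski's Theorem is quoted as a classical result and used as a black box in the proof of Theorem~4.3, so there is no in-paper argument to compare yours against. What you have written is the architecture of the standard modern proof (Euler-formula counting for necessity; for sufficiency, an edge-minimal counterexample, reduction to the $3$-connected case, Thomassen's contractible-edge lemma, and re-splitting the contracted vertex inside a plane embedding of $G/e$). That architecture is correct and would work.

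However, as a proof it has a genuine gap, and you name it yourself: the entire content of the theorem lives in the step you defer as ``the technically heaviest part,'' namely showing that whenever the $x$-neighbours and $y$-neighbours interleave around the facial cycle at $v$, one can exhibit an explicit subdivision of $K_{3,3}$ (three pairwise alternating neighbours) or of $K_5$ (three common neighbours of $x$ and $y$ in the overlap pattern), and that these configurations exhaust all obstructions. Without that case analysis nothing is proved. A second, smaller gap is in the low-connectivity reduction: splitting $G$ along a $2$-separator $\{u,v\}$ and gluing plane embeddings of the pieces does not work as stated; you must add a virtual edge $uv$ to each piece (justifying that this creates no Kuratowski subdivision, via a $u$--$v$ path through the other piece), embed each piece with $uv$ on a common face, and then glue. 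As written, two plane pieces sharing only the vertex pair $\{u,v\}$ need not admit a combined embedding of $G$ unless $u$ and $v$ lie on a common face of each, which is exactly what the virtual edge guarantees.
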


From the proof of Lemma~4.2 in \cite{aYaoZ2015} and Theorem~\ref{th:mict}, the following theorem is immediate.
\begin{theorem}\label{th:k33nm}
	Let $L$ be a finite distributive lattice and $x\in L$. If $x$ is covered by at least three elements and covers at least three meet-irreducible elements, then $L$ is non-matchable.
\end{theorem}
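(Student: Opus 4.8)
The plan is to argue by contradiction, assuming $L$ is matchable, and then to extract from the lattice structure around $x$ a topological obstruction to planarity, namely a subdivision of $K_{3,3}$. First I would invoke the hypothesis that $L$ is matchable: by definition there is a plane weakly elementary bipartite graph $G$ with $L\cong\mathcal{M}(G)$, and by Theorem \ref{th:mfdl} the Hasse diagram of $L$ is isomorphic to $\vec{Z}(G)$. Under this isomorphism the distinguished element $x$ corresponds to a perfect matching $M$ of $G$. The covering relations at $x$ translate into $Z$-transformation arcs at $M$: since $x$ is covered by at least three elements, $M$ has at least three improper $M$-alternating cells (the arcs into $M$), and since $x$ covers at least three meet-irreducible elements, $M$ has at least three proper $M$-alternating cells $f_1,f_2,f_3$ each of which is a meet-irreducible cell with respect to $M$.

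The key structural input is Theorem \ref{th:mict}, part 2: each meet-irreducible proper cell $f_i$ must intersect every improper $M$-alternating cell. Choose three improper cells $g_1,g_2,g_3$ (the ones realizing the three covers of $x$). By Lemma \ref{th:ipdj} the three proper cells are pairwise disjoint and the three improper cells are pairwise disjoint, but by the meet-irreducibility characterization every $f_i$ meets every $g_j$. I would then build an auxiliary plane graph whose nine ``contact'' incidences between the $f_i$ and the $g_j$ realize nine internally disjoint connecting paths, with the boundaries of the three $f_i$ serving as one part of the bipartition and the boundaries of the three $g_j$ as the other. Because $G$ is embedded in the plane, these cells and the paths joining them inherit a plane embedding, and the complete bipartite pattern of contacts between $\{f_1,f_2,f_3\}$ and $\{g_1,g_2,g_3\}$ yields a subdivision of $K_{3,3}$ as a subgraph of (a minor-like contraction of) $G$.

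With a $K_{3,3}$-subdivision embedded in the plane, Kuratowski's Theorem delivers the contradiction: a planar graph contains no subdivision of $K_{3,3}$, yet $G$ was assumed to be a plane graph containing precisely such a subdivision. Hence no such $G$ exists, and $L$ is non-matchable.

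The main obstacle I anticipate is the geometric/topological step of the second paragraph: turning the combinatorial statement ``each of three disjoint proper cells meets each of three disjoint improper cells'' into an honest $K_{3,3}$-subdivision living inside the plane graph $G$. One must verify that the six cell boundaries can be contracted (or that suitable representative vertices can be chosen) so that the nine required connecting paths are internally vertex-disjoint and meet the six ``branch'' regions only at their endpoints; the pairwise disjointness guaranteed by Lemma \ref{th:ipdj} is exactly what keeps the two colour classes of branch vertices separated, so the bookkeeping should go through, but it is where the real care is needed. I expect this is precisely the content borrowed from the proof of Lemma~4.2 in \cite{aYaoZ2015}, with Theorem \ref{th:mict} supplying the ``every proper cell meets every improper cell'' incidence that makes the bipartite pattern complete.
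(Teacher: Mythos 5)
Your proposal matches the paper's intended argument: the paper declares this theorem immediate from Theorem~\ref{th:mict} together with the proof of Lemma~4.2 in \cite{aYaoZ2015}, and your reduction (elements covering $x$ $\leftrightarrow$ improper $M$-alternating cells, covered meet-irreducible elements $\leftrightarrow$ proper meet-irreducible cells, Theorem~\ref{th:mict}(2) forcing every $f_i$ to meet every $g_j$, Lemma~\ref{th:ipdj} giving disjointness within each triple, and Kuratowski's Theorem for the contradiction) is exactly that argument. The only difference is cosmetic: the topological step you flag as the main obstacle is handled in the paper's style (cf.\ the proof of Theorem~\ref{th:fdnm}) by passing to the planar dual $G^*$, where intersecting $M$-alternating cells necessarily share an $M$-edge and hence give adjacent dual vertices, so $K_{3,3}$ appears directly as a subgraph of the planar graph $G^*$ and no path bookkeeping inside $G$ is needed.
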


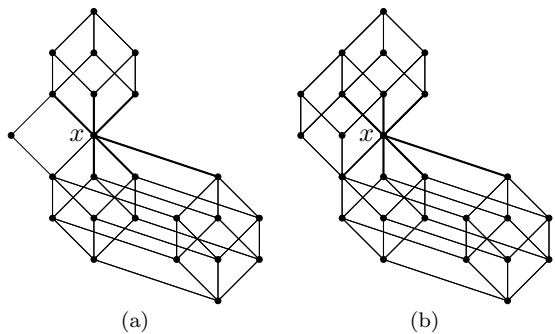
\begin{figure}[!ht]
	\centering
	\begin{tikzpicture}[scale=0.55]
	  \begin{scope}[xshift=-7cm]
	\foreach \i in {0,1}
	{
		\foreach \j in {0,1}
		{
			\foreach \k in {0,1}
			{
				\draw (\i-\j,\i+\j) -- (\i-\j,\i+\j+1)
				(\i,\i+\k) -- (\i-1,\i+1+\k)
				(-\j,\j+\k) -- (1-\j,1+\j+\k);
				\filldraw (\i-\j,\i+\j+\k) circle (2pt);
			}
		}
	}
	
	\foreach \i in {0,1}
	{
		\foreach \j in {0,1}
		{
			\foreach \k in {0,1}
			{
				\foreach \l in {0,1}
				{
					\draw (\j+3*\l,-\j-\k-\l) -- (-1+\j+3*\l,-1-\j-\k-\l)
					(-\i+3*\l,-\i-\k-\l) -- (-\i+1+3*\l,-\i-1-\k-\l)
					(-\i+\j+3*\l,-\i-\j-\l) -- (-\i+\j+3*\l,-\i-\j-1-\l)
					(-\i+\j,-\i-\j-\k) -- (-\i+\j+3,-\i-\j-\k-1);
					\filldraw (-\i+\j+3*\l,-\i-\j-\k-\l) circle (2pt);
				}
			}
		}
	}
	
	\draw (-1,1) -- (-2,0) -- (-1,-1);
	\filldraw (-2,0) circle (2pt);
	
	\draw[thick] (-1,1) -- (0,0) -- (3,-1)
	(0,1) -- (0,0) -- (1,-1)
	(1,1) -- (0,0) -- (0,-1);
	
	\node[left] at (0,0) {$x$};
	
	\node at (1,-4.5) {\footnotesize{(a)}};
	
	\end{scope}
	
	\foreach \i in {-1,0,1}
	{
		\foreach \j in {0,1}
		{
			\foreach \k in {0,1}
			{
				\draw (\i-\j,\i+\j) -- (\i-\j,\i+\j+1)
				(\i,\i+\k) -- (\i-1,\i+1+\k);
				\foreach \t in {0,1}
				{
					\draw (\t-\j,\t+\j+\k) -- (\t-1-\j,\t-1+\j+\k);
				}
				\filldraw (\i-\j,\i+\j+\k) circle (2pt);
			}
		}
	}
	
	\foreach \i in {0,1}
	{
		\foreach \j in {0,1}
		{
			\foreach \k in {0,1}
			{
				\foreach \l in {0,1}
				{
					\draw (\j+3*\l,-\j-\k-\l) -- (-1+\j+3*\l,-1-\j-\k-\l)
					(-\i+3*\l,-\i-\k-\l) -- (-\i+1+3*\l,-\i-1-\k-\l)
					(-\i+\j+3*\l,-\i-\j-\l) -- (-\i+\j+3*\l,-\i-\j-1-\l)
					(-\i+\j,-\i-\j-\k) -- (-\i+\j+3,-\i-\j-\k-1);
					\filldraw (-\i+\j+3*\l,-\i-\j-\k-\l) circle (2pt);
				}
			}
		}
	}
	
	\draw[thick] (-1,1) -- (0,0) -- (3,-1)
	(0,1) -- (0,0) -- (1,-1)
	(1,1) -- (0,0) -- (0,-1);
	
	\node[left] at (0,0) {$x$};
	
	\node at (1,-4.5) {\footnotesize{(b)}};
	\end{tikzpicture}\\
	\caption{Two non-matchable distributive lattices}\label{fig:p3s12}
\end{figure}

For instance, it is easy to see that each distributive lattice in Figure~\ref{fig:p3s12} is non-matchable by Theorem~\ref{th:k33nm}, but it is difficult to determine only by Theorem~4.3 in \cite{aYaoZ2015}.

Obviously, theorem~\ref{th:k33nm} could be obtained in dual.

\begin{corollary}
	If $L$ is a matchable distributive lattice, then for every element of $L$, it either is covered by at most two elements or covers at most two meet-irreducible elements in both $L$ and $L^*$.
\end{corollary}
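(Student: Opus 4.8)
The plan is to recognize the corollary as nothing more than the contrapositive of Theorem~\ref{th:k33nm}, applied simultaneously to $L$ and to its dual $L^*$. First I would note that Theorem~\ref{th:k33nm} asserts that any finite distributive lattice possessing an element $x$ that is covered by at least three elements and covers at least three meet-irreducible elements must be non-matchable. Taking the contrapositive: if $L$ is matchable, then no element $x\in L$ can simultaneously satisfy both conditions, and hence every $x\in L$ is either covered by at most two elements or covers at most two meet-irreducible elements. This settles the assertion ``in $L$''.

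For the ``in $L^*$'' half, the key step is to argue that matchability is a self-dual property, so that $L^*$ is matchable whenever $L$ is. I would invoke the remark preceding Corollary~\ref{th:cutmic} that duality may be treated in dual. Concretely, if $L\cong\mathcal{M}(G)$ for a plane weakly elementary bipartite graph $G$, then interchanging the black and white colors of the bipartition of $G$ produces a graph $G'$ on the same plane embedding in which every proper $M$-alternating cell becomes improper and vice versa. By the definition of $\vec{Z}$, this reverses every arc, so $\vec{Z}(G')$ is the arc-reversal of $\vec{Z}(G)$; since reversing all arcs of the Hasse diagram of $L$ yields the Hasse diagram of $L^*$, Theorem~\ref{th:mfdl} gives $L^*\cong\mathcal{M}(G')$, and therefore $L^*$ is matchable.

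With $L^*$ matchable, I would apply the same contrapositive of Theorem~\ref{th:k33nm} to $L^*$, yielding that every element of $L^*$ is covered by at most two elements or covers at most two meet-irreducible elements. Combining the two halves gives the stated conclusion for both $L$ and $L^*$ simultaneously.

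The main obstacle I anticipate is precisely the self-duality of matchability: one must verify carefully that the color swap produces a graph that is still plane, still weakly elementary, and still bipartite with the same vertex and edge sets, so that the perfect matchings are literally the same set while only the orientation of $\vec{Z}$ flips. Once this is in place, everything else is a purely logical rephrasing of Theorem~\ref{th:k33nm} and requires no further combinatorial work.
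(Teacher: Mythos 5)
Your proposal is correct. The paper itself offers no written proof: the corollary is presented as immediate from Theorem~\ref{th:k33nm} together with the remark that the theorem ``could be obtained in dual,'' i.e.\ the paper's intended route is to apply the contrapositive of Theorem~\ref{th:k33nm} to $L$ and the contrapositive of its dual statement (with join-irreducible elements and reversed covering relations) again to $L$. Your treatment of the first half coincides with this. For the second half you take a slightly different, but equally valid, route: instead of dualizing the \emph{theorem}, you dualize the \emph{lattice}, establishing that matchability is self-dual via the black--white color swap (which flips proper and improper alternating cells, hence reverses every arc of $\vec{Z}(G)$, hence realizes $L^*$ as $\mathcal{M}(G')$ by Theorem~\ref{th:mfdl}), and then reapply the original Theorem~\ref{th:k33nm} to $L^*$. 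What your version buys is that you never need to re-derive the dual forms of Theorem~\ref{th:mict} and of the underlying $K_{3,3}$ argument; you only need the one clean graph-theoretic fact that the color swap preserves planarity, bipartiteness and weak elementarity, which, as you note, is trivial since the graph, its embedding and its edge set are unchanged. What the paper's version buys is that it stays entirely inside order-theoretic duality and never has to argue about the graph at all. Both arguments are sound and yield the stated conclusion.
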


Given a plane graph $G$, its (geometric) dual $G^*$ is constructed as follows: place a vertex in each face of $G$ (including the exterior face) and, if two faces have an edge $e$ in common, join the corresponding vertices by an edge $e^*$ crossing only $e$ \cite{bHarar1969}. It is easy to see that the dual $G^*$ of a plane graph $G$ is itself a planar graph\cite{bBondyM2008}.
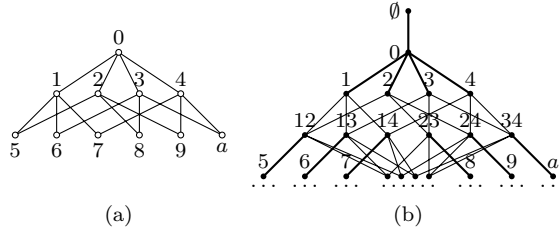
\begin{figure}[!ht]
	\centering\footnotesize{
		\begin{tikzpicture}[scale=0.55]
		\begin{scope}[xshift=-7cm]

		\coordinate[label=below:$5$] (3) at (0,0);
		\coordinate[label=below:$6$] (5) at (1,0);
		\coordinate[label=below:$7$] (6) at (2,0);
		\coordinate[label=below:$8$] (9) at (3,0);
		\coordinate[label=below:$9$] (A) at (4,0);
		\coordinate[label=below:$a$] (C) at (5,0);
		\coordinate[label=above:$1$] (7) at (1,1);
		\coordinate[label=above:$2$] (B) at (2,1);
		\coordinate[label=above:$3$] (D) at (3,1);
		\coordinate[label=above:$4$] (E) at (4,1);
		\coordinate[label=above:$0$] (F) at (2.5,2);
		
		\draw (7) -- (3)
		(7) -- (5)
		(7) -- (6)
		(7) -- (F)
		(B) -- (3)
		(B) -- (A)
		(B) -- (9)
		(B) -- (F)
		(D) -- (9)
		(D) -- (C)
		(D) -- (5)
		(D) -- (F)
		(E) -- (C)
		(E) -- (6)
		(E) -- (A)
		(E) -- (F);
		
		\foreach \i in {1,2,3,4}
		{
			\foreach \j in {0,1}
			{
				\filldraw[fill=white] (\i,\j) circle (2pt);
			}
		}
		\filldraw[fill=white] (F) circle (2pt)
		(3) circle (2pt)
		(C) circle (2pt);
		
		\node at (2.5,-1.95) {(a)};
		
		\end{scope}

		\coordinate[label=left:$\emptyset$] (es) at (2.5,3);
		\coordinate[label=left:$0$] (F) at (2.5,2);
		\coordinate[label=above:$1$] (7) at (1,1);
		\coordinate[label=above:$2$] (B) at (2,1);
		\coordinate[label=above:$3$] (D) at (3,1);
		\coordinate[label=above:$4$] (E) at (4,1);
		\coordinate[label=above:$12$] (7B) at (0,0);
		\coordinate[label=above:$13$] (7D) at (1,0);
		\coordinate[label=above:$14$] (7E) at (2,0);
		\coordinate[label=above:$23$] (BD) at (3,0);
		\coordinate[label=above:$24$] (BE) at (4,0);
		\coordinate[label=above:$34$] (DE) at (5,0);
		\coordinate[label=above:$5$] (3) at (-1,-1);
		\coordinate[label=above:$6$] (5) at (0,-1);
		\coordinate[label=above:$7$] (6) at (1,-1);
		\coordinate (7BD) at (2,-1);
		\coordinate (7BE) at (2.33,-1);
		\coordinate (7DE) at (2.67,-1);
		\coordinate (BDE) at (3,-1);
		\coordinate[label=above:$8$] (9) at (4,-1);
		\coordinate[label=above:$9$] (A) at (5,-1);
		\coordinate[label=above:$a$] (C) at (6,-1);
		
		\foreach \k in {-1,0,1,2.15,2.85,4,5,6}
		{
			\node[below] at (\k,-1) {$\ldots$};
		}
		
		\draw[thick] (es) -- (F)
		(B) -- (F) -- (D)
		(7) -- (F) -- (E)
		(7B) -- (3)
		(7D) -- (5)
		(7E) -- (6)
		(BD) -- (9)
		(BE) -- (A)
		(DE) -- (C);
		\draw (7) -- (7B) -- (B) -- (BD) -- (D) -- (DE) -- (E) -- (7E) -- (7) -- (7D) -- (D)
		(B) -- (BE) -- (E)
		(BDE) -- (DE) -- (7DE) -- (7E) -- (7BE) -- (7B) -- (7BD) -- (BD) -- (BDE) -- (BE) -- (7BE)
		(7DE) -- (7D) -- (7BD);
		
		\foreach \i in {1,2,3,4}
		{
			\foreach \j in {0,1}
			{
				\fill (\i,\j) circle (2pt);
			}
		}
		\foreach \i in {-1,...,2,2.33,2.67,3,4,...,6}
		{
			\fill (\i,-1) circle (2pt);
		}
		\fill (0,0) circle (2pt)
		(5,0) circle (2pt)
		(2.5,3) circle (2pt)
		(2.5,2) circle (2pt);
		
		\node[below] at (2.5,-1.5) {(b)};
		\end{tikzpicture}}\\
	\caption{(a) The poset $\Delta$ and (b) a part of $\mathcal{F}(\Delta)$}\label{fig:dpfd}
\end{figure}
\begin{theorem}\label{th:fdnm}
	The distributive lattice $\mathcal{F}(\Delta)$ is non-matchable, where $\Delta$ is a poset as shown in Figure~\ref{fig:dpfd}(a).
\end{theorem}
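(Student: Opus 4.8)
The plan is to argue by contradiction from the planarity of the geometric dual. Suppose $\mathcal{F}(\Delta)\cong\mathcal{M}(G)$ for some plane weakly elementary bipartite graph $G$; then the dual $G^{*}$ is planar. Under the isomorphism write $M_{F}$ for the perfect matching corresponding to the filter $F$ of $\Delta$. I will exhibit eleven cells of $G$, one for each element of $\Delta$, whose edge-sharing pattern reproduces the Hasse diagram of $\Delta$ inside $G^{*}$. Since that Hasse diagram (Figure~\ref{fig:dpfd}(a)) is a subdivision of $K_{5}$ --- the four edges $0i$ together with the six edges of the $K_{4}$ on $\{1,2,3,4\}$, each subdivided once through one of $5,\dots,a$ --- Kuratowski's Theorem forces $G^{*}$ to be nonplanar, a contradiction. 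Note first that Theorem~\ref{th:k33nm} and its dual do \emph{not} settle this case: the only element of $\mathcal{F}(\Delta)$ covering three meet-irreducible elements is $M_{\{0\}}$, yet it is covered by a single element, so the sharper information in Theorem~\ref{th:mict} is genuinely needed.

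First I would attach to each $a\in\Delta$ the cell $f_{a}$ rotated along every arc of $\vec{Z}(G)$ that adjoins $a$ to the filter; by Proposition~\ref{th:qkm} this cell is well defined, i.e.\ independent of where inside a hypercube the rotation is made. Then I read off the alternating cells at two families of matchings. At $M_{\{0\}}$ the four proper cells are $f_{1},f_{2},f_{3},f_{4}$ and the unique improper cell is $f_{0}$; since $M_{\{0\}}\oplus f_{i}=M_{\{0,i\}}$ is meet-irreducible in $\mathcal{F}(\Delta)$, each $f_{i}$ is a meet-irreducible cell with respect to $M_{\{0\}}$, so by Theorem~\ref{th:mict}(2b) it meets $f_{0}$. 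For a pair $p=\{i,j\}$ among $5,\dots,a$, at $M_{\{0,i,j\}}$ the two improper cells are exactly $f_{i}$ and $f_{j}$, while $f_{p}$ is proper and meet-irreducible because $M_{\{0,i,j\}}\oplus f_{p}=M_{\{0,i,j,p\}}$ is the principal filter $\uparrow p$; hence by Theorem~\ref{th:mict}(2b) $f_{p}$ meets both $f_{i}$ and $f_{j}$. This yields all $4+12=16$ intersections, one for each cover of $\Delta$.

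Next I would convert each intersection into an edge of $G^{*}$. If two $M$-alternating cells meet at a vertex $v$, then each of them uses the unique $M$-edge at $v$, so they share that edge; thus the sixteen intersections above are honest shared edges of $G$, hence edges of $G^{*}$ between the corresponding faces. To see that these faces form a $K_{5}$-subdivision I must check that the eleven cells are pairwise distinct, so that the six paths $f_{i}-f_{p}-f_{j}$ are internally disjoint. Cells that are simultaneously proper at one matching are disjoint by Lemma~\ref{th:ipdj}; this separates $f_{1},\dots,f_{4}$ from one another, and the six pair-cells from one another and from every $f_{i}$. Finally $f_{0}\neq f_{p}$, because $f_{0}$ meets all of $f_{1},\dots,f_{4}$ (at $M_{\{0\}}$) whereas $f_{p}$ is disjoint from the two $f_{k}$ with $k\notin p$. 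With distinctness established, the subgraph of $G^{*}$ induced on $\{f_{0},\dots,f_{a}\}$ is isomorphic to the graph of Figure~\ref{fig:dpfd}(a), a subdivision of $K_{5}$.

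The hard part will be the bookkeeping in the two middle steps: proving that "adjoining $a$" always rotates one and the same cell $f_{a}$, and that the eleven cells are pairwise distinct. Both reductions rest on repeated use of the quadrilateral and hypercube structure of Proposition~\ref{th:qkm} together with the disjointness of equal-type cells in Lemma~\ref{th:ipdj}, and this is where the argument must be carried out with care. Once the $K_{5}$-subdivision is located inside the planar graph $G^{*}$, Kuratowski's Theorem delivers the contradiction and shows that $\mathcal{F}(\Delta)$ is non-matchable.
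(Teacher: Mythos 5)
Your proposal is correct and follows essentially the same route as the paper's own proof: define a cell $f_a$ for each element $a$ of $\Delta$, use Theorem~\ref{th:mict}(2) to show each meet-irreducible cell meets the relevant improper cells and Lemma~\ref{th:ipdj} to get the disjointness/distinctness, then locate the resulting subdivision of $K_5$ in the dual $G^*$ and invoke Kuratowski's Theorem. You are somewhat more explicit than the paper about the well-definedness of the $f_a$'s and about intersections yielding shared edges, but the argument is the same.
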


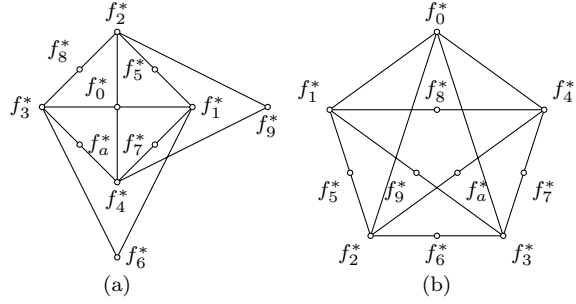
\begin{figure}[!ht]
	\centering\footnotesize{
		\begin{tikzpicture}[scale=0.5]
		  \begin{scope}[xshift=-8.5cm,yshift=1cm]
		\coordinate[label=above left:$f_0^*$] (F) at (0:0);
		\coordinate[label=right:$f_1^*$] (7) at (0:2);
		\coordinate[label=above:$f_2^*$] (B) at (90:2);
		\coordinate[label=left:$f_3^*$] (D) at (180:2);
		\coordinate[label=below:$f_4^*$] (E) at (270:2);
		\coordinate[label=left:$f_5^*$] (3) at (45:1.414);
		\coordinate[label=right:$f_6^*$] (5) at (270:4);
		\coordinate[label=left:$f_7^*$] (6) at (315:1.414);
		\coordinate[label=above left:$f_8^*$] (9) at (135:1.414);
		\coordinate[label=below:$f_9^*$] (A) at (0:4);
		\coordinate[label=right:$f_a^*$] (C) at (225:1.414);
		
		\draw  (F) -- (7) -- (B) -- (D) -- (E) -- (F) -- (B) -- (A) -- (E) -- (7) -- (5) -- (D) -- cycle;
		
		\foreach \i in {0,1,2,3}
		{
			\filldraw[fill=white] (90*\i:2) circle (2pt)
			(45+90*\i:1.414) circle (2pt);
		}
		\filldraw[fill=white] (0,0) circle (2pt)
		(4,0) circle (2pt)
		(0,-4) circle (2pt);
		
		\node at (0,-4.75) {(a)};
		
		\end{scope}
		\coordinate[label=90:$f_0^*$] (F) at (90:3);
		\coordinate[label=90+72:$f_1^*$] (7) at (90+72:3);
		\coordinate[label=90+72*2:$f_2^*$] (B) at (90+72*2:3);
		\coordinate[label=90+72*3:$f_3^*$] (D) at (90+72*3:3);
		\coordinate[label=90+72*4:$f_4^*$] (E) at (90+72*4:3);
		\coordinate[label=126+72:$f_5^*$] (3) at (126+72:2.427);
		\coordinate[label=126+72*2:$f_6^*$] (9) at (126+72*2:2.427);
		\coordinate[label=126+72*3:$f_7^*$] (C) at (126+72*3:2.427);
		\coordinate[label=90:$f_8^*$] (6) at (90:0.927);
		\coordinate[label=90+72*2:$f_9^*$] (5) at (90+72*2:0.927);
		\coordinate[label=90+72*3:$f_a^*$] (A) at (90+72*3:0.927);
		
		\draw (F) -- (7) -- (B) -- (D) -- (E) -- (F) -- (B) -- (E) -- (7) -- (D) -- cycle;
		
		\foreach \i in {0,...,4}
		{
			\filldraw[fill=white] (90+72*\i:3) circle (2pt);
		}
		\foreach \i in {1,2,3}
		{
			\filldraw[fill=white] (126+72*\i:2.427) circle (2pt);
		}
		\foreach \i in {0,2,3}
		{
			\filldraw[fill=white] (90+72*\i:0.927) circle (2pt);
		}
		
		\node at (0,-3.75) {(b)};
		\end{tikzpicture}}\\
	\caption{Proof of Theorem~\ref{th:fdnm}}\label{fig:iopnm}
\end{figure}

\begin{proof}
	Recall that $\mathcal{F}(\Delta)$ is a finite distributive lattice. Suppose that $\mathcal{F}(\Delta)$ is matchable, since $\mathcal{F}(\Delta)$ is irreducible, then there exists a plane elementary bipartite graph $G$ such that $\vec{Z}(G)\cong\mathcal{F}(\Delta)$ \cite{aZhangYY2014}.
	
	Consider a part of $\mathcal{F}(\Delta)$ as drawn in Figure~\ref{fig:dpfd}(b), the vertices $\emptyset$, $0$, $1$, $\cdots$, $a$ correspond to the perfect matchings $M_{\emptyset}$, $M_0$, $M_1$, $\cdots$, $M_a$ of $G$, respectively. Let $f_0=M_{\emptyset}\oplus M_0$, $f_1=M_0\oplus M_1$, $f_5=M_{12}\oplus M_5$, $f_6=M_{13}\oplus M_6$, \dots, and $f_a=M_{34}\oplus M_a$. By definition of $Z$-transformation graph, then $f_0$ is a nice cell, so are $f_1$, $\cdots$, $f_a$. Since the cells $f_0$, $f_1$, $\cdots$, $f_a$ are meet-irreducible cells, by Theorem~\ref{th:mict}(2), the cell $f_0$ intersects $f_1$, $f_2$, $f_3$ and $f_4$; the cell $f_5$ intersects $f_1$  and $f_2$, but it does not intersect $f_3$ or $f_4$, because $f_5$, $f_3$ and $f_4$ are proper $M_{12}$-alternating cells. Thus $f_0$ and $f_5$ are distinct; analogously, $f_0$ and $f_i\ (i\in\{6,7,8,9,a\})$ are distinct too.
	
	Next, consider the dual $G^*$ of $G$,  as drawn in Figure~\ref{fig:iopnm}(a), vertex $f_0^*$ is adjacent with $f_1^*, f_2^*, f_3^*\ \text{and}\ f_4^*$, and $f_5^*$ is adjacent with $f_1^*$ and $f_2^*$, etc. Therefore, let $V'=\{f_0^*,\ \cdots,\ f_a^*\}$, thus $G^*$ contains a subgraph $S^* := G^*[V']$.
	Clearly $S^*$ (see Figure~\ref{fig:iopnm}(b)) is a subdivision of $K_5$. By Kuratowski's Theorem, hence $S^*$ is non-planar, contradicting the planarity of $G$.
\end{proof}
%By duality, it is obvious that the distributive lattice $\mathcal{I}(\Delta)$ is non-matchable too.

As a straightforward consequence of Theorem~\ref{th:fdnm}, we have the following result.
\begin{corollary}
	If a poset $P$ contains $\Delta$ as a convex sub-poset, then distributive lattice $\mathcal{F}(P)$ is non-matchable.
\end{corollary}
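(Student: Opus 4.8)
The plan is to realize $\mathcal{F}(\Delta)$ as an interval of $\mathcal{F}(P)$ and then to show that an interval of a matchable distributive lattice is itself matchable, so that Theorem~\ref{th:fdnm} can be applied to that interval. Concretely, I would argue by contradiction: assume $\mathcal{F}(P)$ is matchable, fix a plane weakly elementary bipartite graph $G$ with $\mathcal{F}(P)\cong\mathcal{M}(G)$, locate the copy of $\mathcal{F}(\Delta)$ as an interval of $\mathcal{M}(G)$, and produce from it a plane weakly elementary bipartite graph $H$ with $\mathcal{M}(H)\cong\mathcal{F}(\Delta)$, contradicting Theorem~\ref{th:fdnm}.

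The first step is the interval correspondence. By the structure theory of finite distributive lattices (Birkhoff duality; see \cite{bDaveyP2002}), the intervals of $\mathcal{F}(P)$ are exactly the filter lattices of the convex subposets of $P$: for filters $U_1\subseteq U_2$ of $P$ the set $\{F\in\mathcal{F}(P): U_1\subseteq F\subseteq U_2\}$ is an interval isomorphic to $\mathcal{F}(U_2\setminus U_1)$, and $U_2\setminus U_1$ is convex; conversely a convex subposet $C$ arises from $U_1=\{p\in P: p>c \text{ for some }c\in C,\ p\notin C\}$ and $U_2=U_1\cup C$. Taking $C=\Delta$ yields an interval of $\mathcal{F}(P)$ isomorphic to $\mathcal{F}(\Delta)$; I would include the routine verification that $U_1,U_2$ are indeed filters and that $U_2\setminus U_1=\Delta$ as posets.

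The second step, which is the crux, transfers matchability to the interval. Under $\mathcal{F}(P)\cong\mathcal{M}(G)$ the interval corresponds to an interval $[M_1,M_2]$ of $\mathcal{M}(G)$ with $M_1\le M_2$. Let $D=M_1\oplus M_2$; as a symmetric difference of two perfect matchings it is a disjoint union of cycles, each simultaneously $M_1$- and $M_2$-alternating and hence nice. Let $H$ be the subgraph of $G$ consisting of these cycles together with their interiors. Then $H$ is plane and bipartite as a subgraph of $G$, and $M_1\cap E(H)$ is a perfect matching of $H$ since in the plane no $M_1$-edge can cross a bounding cycle. Using that $G$ is weakly elementary and that the bounding cycles are nice, I would show $H$ is weakly elementary, and then check that restriction $M\mapsto M\cap E(H)$ is a lattice isomorphism $[M_1,M_2]\to\mathcal{M}(H)$: every $M\in[M_1,M_2]$ differs from $M_1$ only inside $E(H)$ (so $M_1\oplus M\subseteq D$, controlled by the hypercube of Proposition~\ref{th:qkm} and the disjointness in Lemma~\ref{th:ipdj}), the cells flipped inside $H$ are exactly cells of $G$, and the orientation of $\vec{Z}$ is respected. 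Hence $\mathcal{M}(H)\cong[M_1,M_2]\cong\mathcal{F}(\Delta)$ with $H$ plane weakly elementary bipartite, contradicting Theorem~\ref{th:fdnm}.

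The main obstacle is precisely the isomorphism $\mathcal{M}(H)\cong[M_1,M_2]$: it is not enough that $H$ has the right matchings ``in principle,'' as I must rule out that $\mathcal{M}(H)$ is strictly larger than the interval. Equivalently, I must show that $M_1\cap E(H)$ and $M_2\cap E(H)$ are the minimum and maximum of $\mathcal{M}(H)$, i.e.\ that $M_1$ has no improper and $M_2$ no proper alternating cell lying inside $H$; a lower cover of $M_1$ in the ambient lattice $\mathcal{M}(G)$ need not remain in $[M_1,M_2]$, and this is exactly where weak elementarity and the convexity of the interval must be invoked. Note, by contrast, that the failure of meet-irreducibility to pass from the interval to $\mathcal{M}(G)$ is what forces this reduction to $H$ in the first place, rather than a direct rerun of the $K_5$-subdivision argument inside $G^*$. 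Planarity of $H$ is automatic, so Kuratowski's theorem is not re-invoked here; once $H$ is shown to be weakly elementary and to carry precisely the interval as its matching lattice, the corollary follows. If a closure statement of the form ``every interval of a matchable distributive lattice is matchable'' is already available in the cited literature, this second step is immediate and the corollary is indeed straightforward.
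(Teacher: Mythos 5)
Your proposal is correct and follows exactly the route the paper has in mind when it calls the corollary a ``straightforward consequence'': realize $\mathcal{F}(\Delta)$ as an interval of $\mathcal{F}(P)$ via the standard correspondence between convex sub-posets of $P$ and intervals of $\mathcal{F}(P)$, and then use the fact that every nonempty interval of a matchable distributive lattice is matchable to contradict Theorem~\ref{th:fdnm}. The only step you leave unfinished --- that $\mathcal{M}(H)\cong[M_1,M_2]$ for the subgraph $H$ built from the cycles of $M_1\oplus M_2$ and their interiors --- is precisely the interval-closure lemma already established in the cited literature (\cite{aYaoZ2015}, building on \cite{aZhangYY2014,aLamZ2003}), so it can simply be invoked rather than re-derived.
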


Clearly, for any finite distributive lattice $L$, the Cartesian product, linear sum and vertical sum of $\mathcal{F}(P)$ and $L$ are non-matchable.
In particular, the following corollary is immediate.
\begin{corollary}
	The distributive lattice $\mathcal{F}(\mathbf{2}^4)$ is non-matchable.
	In addition, the distributive lattice $\mathcal{F}\left(\prod_{j=1}^k\mathbf{n}_j\right)$ is non-matchable, where $k\ge 4$, $\mathbf{n}_j$ is a chain of length $n_j$ and $n_j\ge 2$ for every $j=1,2,\cdots,k$.
\end{corollary}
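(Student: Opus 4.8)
The plan is to reduce everything to the preceding corollary, which guarantees that $\mathcal{F}(P)$ is non-matchable whenever $P$ contains $\Delta$ as a convex sub-poset. It therefore suffices to realize $\Delta$ as a convex sub-poset first of $\mathbf{2}^4$ and then of the larger product $\prod_{j=1}^{k}\mathbf{n}_j$; the non-matchability of both filter lattices follows at once. Note that $\mathbf{2}^4$ is the special case $k=4$, $n_j=2$, so the first assertion will also serve as the stepping stone for the second.

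First I would identify $\mathbf{2}^4$ with the Boolean lattice $B_4$ of all subsets of $\{1,2,3,4\}$ ordered by inclusion, and exhibit $\Delta$ as its top three ranks. Concretely, define $\psi\colon\Delta\to B_4$ by sending the maximum $0$ to $\{1,2,3,4\}$, each element $i$ covered by $0$ to the three-element set $\{1,2,3,4\}\setminus\{i\}$, and each minimal element, labelled by the pair $p$ of mid-level elements lying above it, to the complementary pair $\{1,2,3,4\}\setminus p$. Reading the Hasse diagram in Figure~\ref{fig:dpfd}(a), the only covering relations of $\Delta$ are $i<0$ and $p<i$ precisely when $i\in p$; since $\{1,2,3,4\}\setminus p\subseteq\{1,2,3,4\}\setminus\{i\}$ holds if and only if $i\in p$, the map $\psi$ is an order-isomorphism of $\Delta$ onto $\{S\subseteq\{1,2,3,4\}:|S|\ge 2\}$. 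This image is an up-set of $B_4$, hence convex, so $\mathbf{2}^4$ contains $\Delta$ as a convex sub-poset and the corollary gives that $\mathcal{F}(\mathbf{2}^4)$ is non-matchable.

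For the general statement I would embed $\mathbf{2}^4$ convexly into $\prod_{j=1}^{k}\mathbf{n}_j$ for $k\ge 4$ and $n_j\ge 2$. Each chain $\mathbf{n}_j$ has at least two elements, so it contains the interval $\{0,1\}$; fixing coordinates $5,\dots,k$ at $0$ and letting coordinates $1,2,3,4$ range over $\{0,1\}$ picks out a sub-poset isomorphic to $\mathbf{2}^4$. This sub-poset is convex, since if two of its points sandwich a point $v$ of the product, then $v$ agrees with them on the fixed coordinates and has each of its first four coordinates trapped between $0$ and $1$, hence in $\{0,1\}$. As ``being a convex sub-poset of'' is transitive, composing this embedding with $\psi$ presents $\Delta$ as a convex sub-poset of $\prod_{j=1}^{k}\mathbf{n}_j$, and the corollary yields that $\mathcal{F}\!\left(\prod_{j=1}^{k}\mathbf{n}_j\right)$ is non-matchable.

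The only genuinely delicate step is the explicit order-isomorphism $\psi$: one must match the covering pattern of $\Delta$, in which each mid-level element $i$ covers exactly the minimal elements indexed by the pairs $p$ with $i\in p$, against that of $B_4$, in which each three-element set covers exactly the pairs it contains. The complementation $S\mapsto\{1,2,3,4\}\setminus S$ is what converts the incidence $i\in p$ into the inclusion $\{1,2,3,4\}\setminus p\subseteq\{1,2,3,4\}\setminus\{i\}$, making the two patterns correspond. Once $\psi$ is checked against Figure~\ref{fig:dpfd}(a), the convexity verifications and the transitivity of convex containment are routine, so I expect no further obstacle.
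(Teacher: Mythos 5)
Your proposal is correct and follows exactly the route the paper intends: the paper declares this corollary ``immediate'' from the preceding one, meaning one exhibits $\Delta$ as a convex sub-poset (in fact the up-set of elements of rank $\ge 2$) of $\mathbf{2}^4$, and $\mathbf{2}^4$ in turn as a convex sub-poset (an interval) of $\prod_{j=1}^{k}\mathbf{n}_j$. You have simply written out the order-isomorphism and the two convexity checks that the paper leaves implicit; no gaps.
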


%%%%%%%%%%%%%%%%%%%%%%%%%%%%%%%%%%%%%%%%%%%%%%%%%%%%%%%
%\subsection*{Acknowledgements}
%The authors are grateful to the referees for their careful reading and many valuable suggestions.

%%%%%%%%%%%%%%%%%%%%%%%%%%%%%%%%%%%%%%%%%%%%%%%%%%%%%%%
% You do not have to use the same format for your references, but 
%    include everything in this file.  Don't use natbib please.
% If you use BibTeX to create a bibliography, copy the .bbl file into here.

%\bibliographystyle{acmke}
%\bibliography{../../../JabRef/paper,../../../JabRef/book}

\end{document}